\newtheorem{theorem}{Theorem}[section]
\newtheorem{lemma}[theorem]{Lemma}
\newtheorem{prop}[theorem]{Proposition}
\newtheorem{cor}[theorem]{Corollary}
\newtheorem{conj}{Conjecture}
\newtheorem{defn}{Definition}
\title{On the sparsity of integers $a$ in solutions to $a!b!=c!$}
\author{Joshua Cooper, Joseph Preuss}
\date{\today}
\begin{document}
\maketitle

\begin{abstract}

We consider the Diophantine equation
$$
a!b! = c!
$$
due to Erd\H{o}s, where we assume $a \leq b$.  It is widely believed that there are only finitely many nontrivial solutions, and considerable work has been dedicated to showing this.  In one direction, Luca (\cite{Luca07}) showed that the set of $c$'s which can appear in solutions has density zero.  Here we show that the set of $a$'s appearing in solutions is also sparse.  In particular, $a$ cannot be one less than a large fraction of primes, and, under the assumption that $\sqrt[k]{a!} \mod 1$ is equidistributed in an appropriate sense, we show that the set of such $a$'s has asymptotic density zero.
\end{abstract}

\section{Introduction}

Erd\H{o}s asks the following problem in \cite{ErdosGraham1980}: for which integers $a_1, \ldots, a_t$ with $t \geq 2$ and $a_i \geq 2$ for each $i$ does there exist an integer $c$ so that $\prod_{i=1}^t a_i! = c!$?  This question, which is open even for $t=2$, has been the inspiration for dozens of papers in elementary number theory since at least the 1970s.  It is widely believed that there are only finitely many ``nontrivial'' solutions, i.e., those for which $c - \max_i a_i > 1$.  In 1991, Erd\H{o}s \cite{Erdos91} showed that for sufficiently large $c$, $k \leq 5 \log\log c$; the constant was subsequently lowered, most recently by \cite{BkhatRamachandra10}, to $(1+o(1))/\log 2 \approx 1.44$. In 2007, Luca \cite{Luca07} showed that there are only finitely many nontrivial solutions if the notorious $abc$ Conjecture holds, and Nair-Shorey \cite{NAIR2016307} refined this to a complete list of solutions conditional on an explicit form of the $abc$ Conjecture due to Baker.  In the same paper, Luca showed that the set of $c$'s that arise in solutions is of asymptotic density zero.  Here, we focus on the $t=2$ case, i.e., $a!b!=c!$, showing that the possible values of $a$ are also rather sparse.

The following definition will play a key role throughout.

\begin{defn}
A solution to $a! b! = c!$ with $a<b$ in the integers is said to be ``class k'' if $c - b = k$.
\end{defn}

Note that class $0$ solutions must have $a=0$ or $a=1$ and $b=c$; it is easy to see that the solution is class $1$ iff $b=a!-1$ and $c=a!$.  Thus, a solution to $a!b!=c!$ is said to be ``trivial'' if $k \leq 1$. It is conjectured that the only non-trivial solution is the triple $(6,7,10)$.  Already the conjecture that there are no class $2$ solutions is open (see \cite{BerendHarmse06}).  In the definition above, and throughout the sequel, we assume that $a < b$.  It is straightforward to see that $a \neq b$ for nontrivial solutions.  Indeed, if $a=b$, then $c!=a!^2$.  However, writing $\nu_p(n!)$ for the $p$-adic order of $n$, this implies that $\nu_p(c!)=2\nu_p(a!)$ for every prime $p$, contradicting that $\nu_p(c!) = 1$ for every prime in $(c/2,c]$, which exists by Bertrand's Postulate.

In Section \ref{sec:primes}, we show that (Corollary \ref{cor:SerreSet}) for any fixed $k>1$, there exists $1/k$ natural density fraction of primes $p$ such that if $a = p-1$, there are no solutions to $a!b!=c!$.  The proof makes use of some interesting elementary number theory: old results of Westland-Fl\"uge on irreducible polynomials which are sparse in the falling-factorial basis, Wilson's Theorem, and Dirichlet's Theorem.  Then, in Section \ref{sec:equidistribution}, by analyzing the asymptotics of falling factorials, we show that $c$ falls into a small interval defined by $a$ and $k$, implying that if $\sqrt[k]{a!} \pmod 1$ is sufficiently equidistributed as $a,k \rightarrow \infty$ -- a much stronger version of which we conjecture is true -- then the set of $a$’s that appear in a solution is also of asymptotic density zero.  Below we write $x^{\underline{k}}$ for the $k$-th falling factorial of $x$, i.e., $x(x-1)\cdots (x-k+1)$.

Here we collect a few useful bounds on possible solutions, especially, that $k < a < b < c$.

\begin{prop} \label{prop:bound_a}
    If $a!b!=c!$ is a solution of class $k$, i.e., $c-b = k$, then $k < a < k + 2 \lceil \log_2 c \rceil$.
\end{prop}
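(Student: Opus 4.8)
The plan is to rewrite a class-$k$ solution in the form $a! = c^{\underline{k}}$ and then attack the two inequalities by quite different means: the lower bound $k<a$ by a crude size comparison, and the upper bound by reading off $a$ exactly from the $2$-adic valuation of the equation.

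For the lower bound I would divide $a!b! = c!$ by $b!$ to obtain
$$
a! = \frac{c!}{b!} = (b+1)(b+2)\cdots(b+k) = c^{\underline{k}},
$$
a product of $k$ integers each of which is at least $b+1$, hence, since $a<b$, strictly larger than $a$. If we had $k\ge a$, then comparing magnitudes would give $a! = c^{\underline{k}} \ge (a+1)^{k} \ge (a+1)^{a} > a^{a} \ge a!$, a contradiction; therefore $k<a$.

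For the upper bound I would apply Legendre's formula in the shape $\nu_2(n!) = n - s_2(n)$, where $s_2(n)$ is the binary digit sum of $n$, to the identity $\nu_2(a!) + \nu_2(b!) = \nu_2(c!)$. This yields $a + b - s_2(a) - s_2(b) = c - s_2(c)$, i.e.
$$
a = k + s_2(a) + s_2(b) - s_2(c).
$$
Since $s_2(c) \ge 1$ and $s_2(n) \le \lfloor \log_2 n\rfloor + 1 = \lceil \log_2(n+1)\rceil$, while $a<b<c$ forces $a+1\le c$ and $b+1\le c$, we get $s_2(a), s_2(b) \le \lceil \log_2 c\rceil$, and hence $a \le k + 2\lceil \log_2 c\rceil - 1 < k + 2\lceil \log_2 c\rceil$.

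I expect no serious obstacle here; the whole argument is elementary, and the one small idea is recognizing that the $2$-adic valuation pins down $a$ exactly in terms of $k$ and the binary digit sums of $a$, $b$, $c$, after which both stated bounds — indeed the marginally sharper $a \le k + 2\lceil\log_2 c\rceil - 1$ — fall out of routine digit-sum estimates. The only points needing care are the degenerate cases: one should note that $k\ge 1$ (so the displayed product over $(b+1),\dots,(b+k)$ is nonempty) and that a genuine class-$k$ solution with $k\ge 1$ and $a<b$ forces $a\ge 2$, which legitimizes the size comparison used for the lower bound.
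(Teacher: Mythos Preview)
Your proof is correct and follows essentially the same route as the paper: Legendre's formula at $p=2$ gives the identity $a = k + s_2(a) + s_2(b) - s_2(c)$ for the upper bound, and the inequality $a! = c^{\underline{k}} \ge (b+1)^k \ge (a+2)^k$ drives the lower bound. Your execution is in fact slightly cleaner --- you avoid invoking Stirling for $k<a$ and extract the marginally sharper $a \le k + 2\lceil\log_2 c\rceil - 1$ --- but the ideas are the same.
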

\begin{proof}
    Write $s_q(n)$ for the sum of the base-$q$ digits of $n$. Clearly, $\nu_p(a!)+\nu_p(b!)=\nu_p(c!)$.  Applying Legendre's Formula, we obtain
    $$
    \frac{a-s_p(a)}{p-1} + \frac{b-s_p(b)}{p-1} = \frac{c-s_p(c)}{p-1},
    $$
    from which it follows that $a-(c-b) = s_p(a)+s_p(b)-s_p(c)$ for every prime $p$.
    Since $a < b < c$, in particular
    $$
    | s_2(a)+s_2(b)-s_2(c) | < 2 \lceil \log_2 c \rceil .
    $$
    Thus, $|a-k| < 2 \lfloor \log_2 c \rfloor$.  Now, we argue that $a > k$.  First,
    \begin{align*}
    a! & = \frac{c!}{b!} = c(c-1)\cdots(c-k+1) \\
    & > (c-k+1)^k = (b+1)^k \geq (a+2)^k.
    \end{align*}
    Then taking the log of both sides above and applying Stirling's approximation (that $\log a! < a \log a$) gives
    $$
    a \log a > \log a! > k \log (a+2)
    $$
    so that $k < a \log a / \log(a+2) < a$.
\end{proof}

\section{Solutions Modulo a Prime} \label{sec:primes}

In this section, we consider the equation $a!b!=c!$  for fixed $k$ modulo various primes, showing that there are many values of $a$ for which there can be no solution.  We include the following standard lemma for completeness.

\begin{lemma} \label{lem:-3} For a prime $p \equiv 5 \pmod{6}$, $-3$ is a quadratic nonresidue modulo $p$. 
\end{lemma}
\begin{proof}
    By multiplicativity of the Legendre symbol and Euler's criterion,
    $$
        \left ( \frac{-3}{p} \right ) = \left ( \frac{-1}{p} \right ) \left ( \frac{3}{p} \right ) = (-1)^{(p-1)/2} \left ( \frac{3}{p} \right ).
    $$
    Then, by Quadratic Reciprocity,
    \begin{align*}
    \left ( \frac{-3}{p} \right ) &= (-1)^{(p-1)/2} (-1)^{(p-1)(3-1)/4} \left ( \frac{p}{3} \right ) \\
    &= (-1)^{p-1} \left ( \frac{2}{3} \right ) = -1
    \end{align*}
    since $p \equiv 2 \pmod{3}$ and $2$ is a quadratic nonresidue modulo $3$.
\end{proof}

The preceding lemma can then be used to rule out many values of $a$ as possible participants in a solution to $a!b!=c!$ with $k=2$, namely, $a$ cannot be one less than a prime which is $5 \pmod{6}$.

\begin{prop} There are no class $2$ solutions to $a!b!=c!$ in positive integers if $a+1$ is a prime congruent to $5 \pmod{6}$.
\end{prop}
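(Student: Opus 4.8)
The plan is to reduce a putative class $2$ solution to the statement that $4 \cdot a! + 1$ is a perfect square, and then extract a contradiction from Wilson's Theorem together with Lemma~\ref{lem:-3}.

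First I would unwind the definition. If $a!b! = c!$ is a solution of class $2$, then $c = b+2$, so
$$
a! = \frac{c!}{b!} = (b+1)(b+2).
$$
Writing $n = b+1$, this says $a! = n(n+1) = n^2 + n$, equivalently $4 \cdot a! + 1 = (2n+1)^2$. Hence $4 \cdot a! + 1$ must be an (odd) perfect square. Note the reduction is legitimate: $a < b$ forces $b \geq 1$, so the factorization $a! = (b+1)(b+2)$ makes sense and $n \geq 2$.

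Next I would bring in the hypothesis that $p := a+1$ is prime. By Wilson's Theorem, $a! = (p-1)! \equiv -1 \pmod p$, hence
$$
4 \cdot a! + 1 \equiv -3 \pmod p.
$$
Since $p \equiv 5 \pmod 6$ forces $p \geq 5$, we have $p \nmid 3$ and therefore $p \nmid 4 \cdot a! + 1$; in particular $p \nmid (2n+1)$, so $2n+1$ is a unit modulo $p$ and $(2n+1)^2 \equiv -3 \pmod p$ exhibits $-3$ as a nonzero quadratic residue modulo $p$. But Lemma~\ref{lem:-3} asserts that $-3$ is a quadratic \emph{non}residue modulo every prime $p \equiv 5 \pmod 6$, a contradiction. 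Therefore no class $2$ solution to $a!b!=c!$ exists when $a+1$ is a prime congruent to $5 \pmod 6$.

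I do not anticipate a real obstacle here; the only points meriting care are verifying that the reduction $a! = (b+1)(b+2)$ is valid and, before invoking quadratic reciprocity via Lemma~\ref{lem:-3}, confirming that $p \nmid 4 \cdot a! + 1$ so that we are genuinely in the quadratic-residue case rather than the degenerate situation $p \mid (2n+1)^2$.
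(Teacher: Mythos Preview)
Your proof is correct and follows essentially the same route as the paper's: both reduce the class $2$ equation to $4a!+1$ being a perfect square (the paper via the quadratic formula applied to $c^2-c-a!=0$, you via completing the square on $n^2+n=a!$), then apply Wilson's Theorem to get $4a!+1\equiv -3\pmod p$ and invoke Lemma~\ref{lem:-3}. Your extra check that $p\nmid 4a!+1$ is a nice bit of hygiene the paper leaves implicit.
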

\begin{proof} 
Note that $c-b=2$ and $a!b!=c!$ implies $a! = c(c-1)$.  Suppose $a!=c(c-1)$ where $a+1=p$ is a prime congruent to $5 \pmod{6}$.  Then
$$
c^2 - c - a! = 0
$$         
so we have $c = \frac{1\pm \sqrt{1-4(-a!)}}{2}$.  Thus, $1+4a! = 1 + 4(p-1)!$ is a perfect square.  By Wilson's Theorem, $1+4(p-1)! \equiv 1+4(-1) = -3 \pmod{p}$, so $-3$ is a quadratic residue modulo $p$.  However, this contradicts Lemma \ref{lem:-3}.
\end{proof}

Next, we consider the case of class $4$: again, half of the primes can be ruled out as possible values of $a+1$.

\begin{prop} \label{thm:pIs4} There are no class $4$ solutions to $a!b!=c!$ in positive integers if $a+1$ is a prime congruent to $2$ or $3 \pmod{5}$.
\end{prop}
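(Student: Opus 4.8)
The plan is to follow the template of the class-$2$ proposition above: rewrite the equation so that some explicit integer is forced to be a perfect square, reduce modulo the prime $p := a+1$ via Wilson's Theorem, and extract a quadratic-residue condition that quadratic reciprocity converts into a congruence on $p$ modulo $5$.

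First I would note that class $4$ means $c - b = 4$, so $a!b!=c!$ collapses to $a! = c(c-1)(c-2)(c-3)$. Pairing the factors as $c(c-3)$ and $(c-1)(c-2)$ and setting $u := c^{2}-3c+1$, one gets $c(c-3) = u-1$ and $(c-1)(c-2) = u+1$, whence $a! = u^{2}-1$; that is, $a!+1 = (c^{2}-3c+1)^{2}$ is a perfect square, exactly analogous to the identity $4a!+1 = (2c-1)^{2}$ used in the $k=2$ case. At this point I would also record that, by Proposition~\ref{prop:bound_a}, any class-$4$ solution has $4 = k < a$, so $a \ge 5$ and $p = a+1 \ge 7$ is an odd prime with $p \neq 5$; this disposes of the small primes and legitimizes the Legendre-symbol computations to come.

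Next, reduce modulo $p$. Wilson's Theorem gives $a! = (p-1)! \equiv -1 \pmod p$, so $u^{2} = a!+1 \equiv 0 \pmod p$ and hence $p \mid u$, i.e., $c^{2}-3c+1 \equiv 0 \pmod p$. Multiplying by $4$ and completing the square yields $(2c-3)^{2} \equiv 5 \pmod p$, so $5$ is a quadratic residue modulo $p$. Since $5 \equiv 1 \pmod 4$, quadratic reciprocity gives $\left(\tfrac{5}{p}\right) = \left(\tfrac{p}{5}\right)$, and as the quadratic residues modulo $5$ are precisely $\{1,4\}$, we have $\left(\tfrac{5}{p}\right) = -1$ whenever $p \equiv 2$ or $3 \pmod 5$ — contradicting that $5$ is a residue. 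Therefore no class-$4$ solution exists for such $a$.

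I do not anticipate a serious obstacle: the argument is short and the only genuinely clever move is the factor pairing $c(c-3)\cdot(c-1)(c-2)$ that exposes $a!+1$ as a square, mirroring the $k=2$ proof. The points requiring care are (i) verifying that the excluded residue classes $\{2,3\} \bmod 5$ are exactly the nonresidues, and (ii) invoking Proposition~\ref{prop:bound_a} so that $p$ is known to be an odd prime different from $5$ before dividing by $2$ or applying reciprocity.
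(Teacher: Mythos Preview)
Your proof is correct and follows essentially the same route as the paper: both exploit the identity $c(c-1)(c-2)(c-3)+1=(c^{2}-3c+1)^{2}$, apply Wilson's Theorem to force $p\mid c^{2}-3c+1$, and complete the square to conclude that $5$ must be a quadratic residue modulo $p$. The only cosmetic difference is that the paper introduces $r\equiv c/2\pmod p$ and works with $4r^{2}-6r+1$ and the fraction $5/4$, whereas you stay with $c$ and the integer $(2c-3)^{2}\equiv 5$; your presentation is slightly cleaner, and your appeal to Proposition~\ref{prop:bound_a} to rule out $p\le 5$ replaces the paper's ad hoc small-case check.
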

\begin{proof}
Suppose $a! = c(c-1)(c-2)(c-3)$, where $a=p-1$ for some prime $p$.  Note that $b>0$ implies $c=b+4>4$, so $c!/b! \geq 24 > 3!$, so we may assume $a>3$ and so $p>4$; furthermore, if $b>0$ then $4! = c(c-1)(c-2)(c-3)$ only has the solution $c=4$, but then $b = c-4 = 0$, a contradiction, so $p \neq 5$. Let $r \equiv c/2 \pmod{p}$, and apply Wilson's Theorem to conclude that
$$
2r(2r-1)(2r-2)(2r-3) \equiv -1 \pmod{p}.
$$
Now, 
$$
2r(2r-1)(2r-2)(2r-3) = (4r^2-6r+1)^2-1,
$$
so 
$$
(4r^2-6r+1)^2 \equiv 0 \pmod{p},
$$
i.e., $p | 4r^2 - 6r + 1$.  However, $(2r-3/2)^2 - 5/4 = 4r^2 - 6r + 1$, so $5/4$ is a quadratic residue mod $p$, which happens iff $5$ is a quadratic residue mod $p$.  It is known this occurs only for primes $p \equiv 1,4 \pmod{5}$.
\end{proof}

For general $k$, we can show that at least a $1/k$ fraction of the primes cannot be $a+1$ if $F_k := x^{\underline{k}}+1$ is irreducible.

\begin{theorem} \label{thm:PositiveDensityIfIrreducible}Suppose $F_k(x) = x(x-1)\cdots (x-k+1)+1$ is irreducible over $\mathbb{Z}$.  Let $P_0$ be the set of primes so that, if $a=p-1$ for some $p \in P_0$, then $a!b!=c!$ has no class $k$ solutions in positive integers.  Then the natural density of $P_0$ in the primes is at least $1/k$.
\end{theorem}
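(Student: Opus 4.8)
The plan is to reduce the existence of a class $k$ solution with $a = p-1$ to the solvability of the congruence $F_k(x) \equiv 0 \pmod p$, and then to count the primes for which this congruence is \emph{insoluble} by combining the Chebotarev density theorem with a classical lower bound on the number of fixed-point-free elements of a transitive permutation group. (We may assume $k \ge 2$, since class $1$ solutions are trivial and occur for every $a$.)

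For the reduction, suppose $a = p-1$ and $a!b! = c!$ is a class $k$ solution in positive integers, so that $c - b = k$ and hence
$$ a! = \frac{c!}{b!} = c(c-1)\cdots(c-k+1) = c^{\underline{k}} . $$
By Wilson's Theorem $a! = (p-1)! \equiv -1 \pmod p$, so $c^{\underline{k}} \equiv -1 \pmod p$, i.e.\ $F_k(c) = c^{\underline{k}} + 1 \equiv 0 \pmod p$. Thus $F_k$ has a root modulo $p$. Equivalently, if $F_k$ has no root in $\mathbb{F}_p$, then $a = p-1$ admits no class $k$ solution, so $P_0$ contains every prime $p$ for which $F_k$ has no root modulo $p$; it suffices to show this set of primes has natural density at least $1/k$.

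Now let $L$ be the splitting field of $F_k$ over $\mathbb{Q}$ and $G = \mathrm{Gal}(L/\mathbb{Q})$, regarded via its action on the $k$ roots of $F_k$ as a subgroup of $S_k$; since $F_k$ is irreducible, $G$ is transitive. For every prime $p$ not dividing $\mathrm{disc}(F_k)$ (all but finitely many), the reduction $F_k \bmod p$ is separable, and the degrees of its irreducible factors over $\mathbb{F}_p$ coincide with the cycle lengths of the Frobenius class $\mathrm{Frob}_p \subseteq G$; in particular $F_k$ has a root modulo $p$ if and only if the elements of $\mathrm{Frob}_p$ have a fixed point on the $k$ roots. Hence, by the Chebotarev density theorem, the primes $p$ for which $F_k$ has no root modulo $p$ have natural density
$$ \frac{|\{\sigma \in G : \sigma \text{ acts without fixed points}\}|}{|G|} . $$
Since $G$ is a transitive subgroup of $S_k$ with $k \ge 2$, Jordan's theorem supplies a fixed-point-free element, and the quantitative strengthening of Cameron and Cohen gives that the number of such elements is at least $|G|/k$. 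Therefore the density above, and with it the density of $P_0$, is at least $1/k$.

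The crux is the final group-theoretic input. The easy bound --- all $k$ point stabilizers contain the identity, so by inclusion--exclusion at most $|G| - k + 1$ elements have a fixed point, leaving at least $k-1$ derangements --- only yields density $\ge (k-1)/|G|$, which is weaker than $1/k$ unless $|G|$ is tiny; one genuinely needs the Cameron--Cohen bound $|G|/k$, which I would cite (it also underlies Serre's exposition of Jordan's theorem) rather than reprove. The only other point needing care is the standard but slightly technical dictionary between the factorization type of $F_k \bmod p$ and the cycle type of $\mathrm{Frob}_p$, which requires excluding the finitely many primes dividing $\mathrm{disc}(F_k)$ and hence has no effect on the density.
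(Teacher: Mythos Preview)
Your proof is correct and follows essentially the same approach as the paper: both reduce via Wilson's theorem to the insolubility of $F_k(x) \equiv 0 \pmod p$, and then invoke the fact that for an irreducible polynomial of degree $k$ the set of such primes has density at least $1/k$. The paper simply cites Serre's exposition for this last step, whereas you unpack it as Chebotarev plus the Cameron--Cohen derangement bound --- which is exactly what Serre's paper does.
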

\begin{proof}
    Suppose $a!b!=c!$ is class $k$ and $a = p-1$ for some prime $p$.  Then $c = b+k$ and $a! = c!/b! = F_k(c)-1$.  Then, reducing this equation modulo $p$ yields $c^{\underline{k}} \equiv a! \pmod{p}$, and by Wilson's Theorem, this is equivalent to $F_k(x) \equiv 0 \pmod{p}$.  However, by Theorem 1 and 2 of \cite{Serre03}, the set $P_0$ of primes $p$ for which $F_k(x)$ does not have solution modulo $p$ has positive density, and the density is at least $1/k$.
\end{proof}

\begin{prop}\label{thm:WestlundFluge}
    For $k \geq 2$, $F_k(x)$ is reducible over $\mathbb{Z}$ iff $k=4$.    
\end{prop}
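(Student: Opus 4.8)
The plan is to verify the two directions separately. For the easy direction, I would exhibit the factorization when $k=4$ directly: $F_4(x) = x(x-1)(x-2)(x-3)+1 = (x^2-3x+1)^2$, which is immediate from the computation $x(x-1)(x-2)(x-3) = (x^2-3x)(x^2-3x+2) = (x^2-3x+1)^2 - 1$ already used in the proof of Proposition \ref{thm:pIs4}. This shows $F_4$ is reducible. It also handles, for free, the small cases one might want to check by hand (e.g.\ $F_2(x) = x^2-x+1$ and $F_3(x) = x^3 - 3x^2 + 2x + 1$ are irreducible over $\mathbb{Z}$ — the latter has no rational root since $\pm 1$ are not roots).

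For the hard direction — that $F_k$ is irreducible for every $k \geq 2$ with $k \neq 4$ — the natural route is to invoke the classical theorem of Westlund (and the closely related work of Fl\"uge) on the irreducibility of $x^{\underline{k}} + c$ over $\mathbb{Q}$: these polynomials, being ``sparse in the falling-factorial basis,'' were shown to be irreducible except for a short explicit list of exceptions, with $x^{\underline{4}}+1$ the only reducible instance among $x^{\underline{k}}+1$ for $k \ge 2$. So the bulk of the proof is really a citation, and the work is to state the cited result precisely and match indices. If one instead wants a self-contained argument, the standard approach would be a prime-divisor / Newton-polygon style argument: reduce $F_k$ modulo a well-chosen prime $p$ (using Bertrand's Postulate to locate a prime in a suitable range relative to $k$) so that $x^{\underline{k}}$ has a controlled factorization $\bmod\ p$, then combine with a bound on the possible degrees of factors; alternatively one can use the fact that $F_k(0) = F_k(1) = \cdots = F_k(k-1) = 1$ together with a Gauss-lemma argument on the values of a hypothetical factor $g$ at $0,1,\dots,k-1$ (each $g(i) \in \{\pm 1\}$), forcing $g$ to be close to a product of terms $(x-i)$ and then deriving a contradiction with the constant term unless the degenerate $k=4$ case occurs.

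The main obstacle is the irreducibility direction in full generality: ruling out \emph{all} possible factorization types for arbitrary $k$ is genuinely delicate, which is exactly why the paper leans on the Westlund--Fl\"uge machinery rather than reproving it. The value-based argument sketched above gets complicated because a factor $g$ of degree $d$ can take the value $\pm1$ at the $k$ points $0,\dots,k-1$ without itself being $\prod(x-i)\pm 1$ once $d < k$; controlling the sign pattern and the leading behaviour simultaneously is where the real case analysis lives, and where the $k=4$ exception emerges. I would therefore present the proof as: (i) the explicit square factorization for $k=4$; (ii) a clean statement of the relevant Westlund--Fl\"uge irreducibility criterion; (iii) the one-line deduction that $F_k$ for $k \neq 4$ falls under the ``irreducible'' clause of that criterion; and I would relegate any attempt at a from-scratch proof to a remark.
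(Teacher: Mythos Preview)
Your proposal is correct and follows essentially the same route as the paper: the paper's proof is a one-line citation to the Westlund--Fl\"uge result (via Dorwart--Ore), and your plan likewise reduces the irreducibility direction to that classical theorem, while additionally supplying the explicit factorization $F_4(x)=(x^2-3x+1)^2$ and checking small cases by hand. The extra self-contained sketches you mention are not pursued in the paper, but your main line of argument matches it.
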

\begin{proof}
    This is an immediate consequence of a result by Westlund and Fl\"{u}ge, answering a question of Schur, as discussed in \cite{DorwartOre33}.
\end{proof}

\begin{cor} \label{cor:SerreSet}
    Fix $k > 1$.  For at least a $1/k$ natural density subset of primes $p$, the equation
    $$
    (p-1)! b! = (b+k)!
    $$
    has no solutions for integers $b$.
\end{cor}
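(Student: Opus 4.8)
The plan is to read this off as a case split on whether $k = 4$, since that is exactly the value at which $F_k$ fails to be irreducible, and to glue together Theorem \ref{thm:PositiveDensityIfIrreducible}, Proposition \ref{thm:WestlundFluge}, and Proposition \ref{thm:pIs4}.

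First suppose $k \neq 4$ (and $k > 1$). By Proposition \ref{thm:WestlundFluge}, the polynomial $F_k(x) = x^{\underline{k}} + 1$ is irreducible over $\mathbb{Z}$, so Theorem \ref{thm:PositiveDensityIfIrreducible} applies directly: the set $P_0$ of primes $p$ for which $(p-1)!\,b! = (b+k)!$ has no solution in integers $b$ has natural density at least $1/k$. There is nothing further to do in this range of $k$.

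It remains to treat $k = 4$. Here I would invoke Proposition \ref{thm:pIs4}, which says there is no class $4$ solution whenever $a+1 = p$ is a prime congruent to $2$ or $3 \pmod 5$; equivalently, $(p-1)!\,b! = (b+4)!$ has no solution in positive integers $b$ for such $p$. A small amount of bookkeeping is needed to upgrade ``no solution in positive integers $b$'' to ``no solution in integers $b$'' as stated in the corollary: the only additional case is $b = 0$, which would force $(p-1)! = 4!$, i.e. $p = 5$, but $5 \equiv 0 \pmod 5$ is not among the admissible residues, so no such $p$ is lost. Finally, by Dirichlet's theorem on primes in arithmetic progressions, the primes lying in the residue classes $2$ and $3 \pmod 5$ have combined natural density $2/\varphi(5) = 2/4 = 1/2$, which is at least $1/4 = 1/k$. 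This establishes the claim for $k = 4$ and hence for all $k > 1$.

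I do not anticipate a genuine obstacle here — the corollary is essentially a repackaging of the three cited results — so the only point requiring care is the exceptional case $k = 4$, where one must be sure that Proposition \ref{thm:pIs4} together with Dirichlet's theorem still delivers a density of at least $1/k$ (it gives $1/2$, with room to spare) and that the degenerate value $b = 0$ has been accounted for.
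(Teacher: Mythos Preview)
Your proof is correct and follows essentially the same approach as the paper: split on whether $k=4$, apply Proposition~\ref{thm:WestlundFluge} and Theorem~\ref{thm:PositiveDensityIfIrreducible} for $k\neq 4$, and handle $k=4$ via Proposition~\ref{thm:pIs4} together with Dirichlet's theorem. Your treatment is in fact slightly more careful than the paper's, since you explicitly compute the density $1/2$ in the $k=4$ case and dispose of the degenerate $b=0$ possibility.
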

\begin{proof}
    By Theorem \ref{thm:WestlundFluge}, $F_k$ is irreducible if $k \neq 4$, so Theorem \ref{thm:PositiveDensityIfIrreducible} implies that $a!b!=c!$ has no solutions in the integers when $c = b+k$ and $a=p-1$, for an at least $1/k$ density subset of the primes.  Proposition \ref{thm:pIs4} yields the remaining case, by Dirichlet's Theorem on primes in arithmetic progressions.
\end{proof}

From considerable numerical evidence, it appears to be the case that $x^{\underline{k}}-a!$ is irreducible when $a \geq k+3$ and $k \geq 1$, except for $(k,a)=(3,6)$ and $(4,7)$ (corresponding to the sporadic nontrivial solution $(6,7,10)$) and also if there exists a $t$ so that $a=t!-1$ and $k=t!-t$.  (It is not hard to argue that we may assume $a > k+2$.)  This last class of exceptions arises because $P_{t!-t,t!-1}(t!)=0$, and corresponds to class $1$ solutions.  If it is indeed the case that this polynomial is irreducible, then \cite{Serre03} implies there are primes $p$ for which $x^{\underline{k}}-a!$ has no root modulo $p$, so $a!b!=c!$ with $c=b+k$ also has no nontrivial solutions in integers $a$ and $b$.

\begin{conj}
    For integers $a \geq k+3$ and $k \geq 1$, if $x^{\underline{k}}-a!$ is reducible, then $(k,a)=(3,6)$, $(k,a)=(4,7)$, or there exists an integer $t$ so that $a=t!-1$ and $k=t!-t$.
\end{conj}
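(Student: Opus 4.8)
Write $P_{k,a}(x) := x^{\underline{k}} - a!$. An unconditional proof is out of reach, since the statement contains the main conjecture on $a!b!=c!$ (the $d=1$ case below), so what follows is a conditional program. As $P_{k,a}$ is monic, Gauss's lemma reduces the question to irreducibility over $\mathbb{Q}$, and the plan is to rule out a factorization $P_{k,a} = gh$ with $g,h \in \mathbb{Z}[x]$ monic and $1 \le d := \deg g \le \lfloor k/2\rfloor$. The point of departure is that $i^{\underline{k}} = 0$ for every $i \in \{0,1,\dots,k-1\}$, so $g(i)h(i) = P_{k,a}(i) = -a!$ for all such $i$; in particular each $g(i)$ is a nonzero divisor of $a!$, and its complementary divisor $-a!/g(i)$ equals $h(i)$ and hence is itself a value of the degree-$(k-d)$ polynomial $h$. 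Three further constraints come for free. First, comparing the two expressions for the resultant of $g$ with $x^{\underline{k}}$, and using that every root $\rho$ of $g$ satisfies $\rho^{\underline{k}} = a!$, gives $\prod_{i=0}^{k-1}|g(i)| = (a!)^{d}$. Second, a short estimate shows every root $z$ of $P_{k,a}$ lies in $|z| \le 2\max(k,(a!)^{1/k})$, so $|g(i)| \le \bigl(3\max(k,(a!)^{1/k})\bigr)^{d}$ for $0 \le i \le k-1$. Third, Legendre's formula fixes the local data: $\nu_p(g(i)) + \nu_p(h(i)) = \nu_p(a!) = (a - s_p(a))/(p-1)$ for every prime $p \le a$.

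The linear case $d=1$ is essentially the original problem. Writing $g(x) = x-c$, the condition $P_{k,a}(c) = 0$ forces $c > k$ and $a! = c(c-1)\cdots(c-k+1) = c!/(c-k)!$, i.e.\ $a!\,(c-k)! = c!$, a solution of Erd\H{o}s's equation; the exceptions $(k,a) = (3,6)$ and $(4,7)$ are exactly the two arising from $(6,7,10)$, and $a = t!-1$, $k = t!-t$ is the family yielding class-$1$ solutions after reordering. Granting an explicit form of the $abc$ Conjecture, this case is covered by Nair--Shorey \cite{NAIR2016307} (and, for finiteness alone, by Luca \cite{Luca07} together with the interval estimates of Section \ref{sec:equidistribution}), while small $(k,a)$ can be cleared by direct computation. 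So the genuinely new content is the case $d \ge 2$: one must show $P_{k,a}$ never splits into two factors each of degree at least $2$.

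For $d \ge 2$ the strategy is to play the rigidity of a low-degree factor against the scarcity of admissible divisor patterns. Because $\deg g = d < k$, the values $g(0),g(1),\dots,g(k-1)$ satisfy the length-$(d+1)$ linear recurrence expressing that the $(d+1)$-st finite difference of $g$ vanishes, so the sequence is determined by any $d$ of its entries together with monicity; consequently the remaining $k-d \ge d$ values are forced, and \emph{all} $k$ of them, as well as their complements $-a!/g(i)$, must divide $a!$ while simultaneously respecting the size bound $|g(i)| \le \bigl(3\max(k,(a!)^{1/k})\bigr)^{d}$ and the $p$-adic identities above. One expects this overdetermined system to be infeasible whenever $a \ge k+3$. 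The hard part is making that expectation effective: $a!$ is so highly composite that the conditions $g(i)\mid a!$ are individually very weak, and a contradiction seems extractable only by combining the finite-difference recurrence, the localization of the roots, and the valuations $\nu_p(a!)$ rather than using any one of them alone. Since, moreover, the case $d=1$ already subsumes the main open conjecture on $a!b!=c!$, we present the statement only as a conjecture; even conditionally on $abc$, a new idea appears to be needed to eliminate the factors of degree $\ge 2$.
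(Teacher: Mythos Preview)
The statement is labeled a \emph{Conjecture} in the paper, and the paper offers no proof whatsoever: the surrounding paragraph simply reports that the claim is supported by ``considerable numerical evidence,'' explains why the listed exceptions arise (they are precisely the $(k,a)$ for which $P_{k,a}$ has an integer root coming from the sporadic solution $(6,7,10)$ or from class-$1$ solutions), and notes the consequence via Serre's density theorems if the conjecture holds. There is therefore nothing to compare your attempt \emph{to}.

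You correctly recognize this. Your write-up is not a proof and does not pretend to be one; it is a heuristic program, and you say so explicitly (``An unconditional proof is out of reach,'' ``we present the statement only as a conjecture''). Your key structural observation is sound and sharper than anything in the paper: the linear-factor case $d=1$ is \emph{equivalent} to the existence of a solution to $a!\,b!=c!$ of class $k$ (after possibly swapping $a$ and $b$), so the conjecture already contains the main open problem, and even an $abc$-conditional resolution of that case via Luca or Nair--Shorey leaves the genuinely new content in $d\ge 2$.

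The ingredients you list for $d\ge 2$ are all correct: the resultant identity $\prod_{i=0}^{k-1}|g(i)|=(a!)^{d}$, the vanishing of the $(d{+}1)$-st finite difference of $g$, the root bound $|z|\le 2\max(k,(a!)^{1/k})$, and the $p$-adic constraints from Legendre's formula. None of this appears in the paper. As a roadmap it is reasonable, but---as you yourself conclude---it does not yet yield a contradiction, so the statement remains a conjecture in your treatment just as in the paper's.
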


It is worth noting that even when $p$ is not one of the primes ruled out by Corollary \ref{cor:SerreSet}, the set of solutions to $(p-1)!b!=(b+k)!$, i.e., $a!b!=c!$ with $c=b+k$ and $a=p+1$, is still rather sparse.

\begin{prop}
    For $p$ prime, the equation
    $$
    (p-1)!b! = (b+k)!
    $$
    has at most $nk/p + 1$ solutions in the integers with $b \leq n$.
\end{prop}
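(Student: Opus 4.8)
The plan is to cancel $b!$ from both sides, reduce the resulting equation modulo $p$ via Wilson's Theorem to pin $b$ down to a bounded number of residue classes, and then use strict monotonicity to bound how many solutions each such class can hold.

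First I would note that, since $b!\neq 0$, the equation is equivalent to $(b+1)(b+2)\cdots(b+k)=(p-1)!$, i.e., writing $c:=b+k$, to $c^{\underline{k}}=(p-1)!$. Reducing modulo $p$ and invoking Wilson's Theorem, every solution must satisfy $c^{\underline{k}}\equiv (p-1)!\equiv -1\pmod p$, that is, $F_k(c)\equiv 0\pmod p$, where $F_k(x)=x^{\underline{k}}+1$ is the polynomial of Theorem \ref{thm:PositiveDensityIfIrreducible}. Since $F_k$ is monic of degree $k$, its reduction modulo $p$ is again of degree $k$ and hence has at most $k$ roots modulo $p$; moreover, if $k\ge p$ it has none at all, because then the product $c^{\underline{k}}$ of $k$ consecutive integers is divisible by $p$ while $(p-1)!$ is not. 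Consequently every solution $b=c-k$ lies in one of at most $\min(k,p-1)\le k$ residue classes modulo $p$.

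Next I would use that the map $b\mapsto (b+k)!/b!=(b+1)(b+2)\cdots(b+k)$ is strictly increasing on the nonnegative integers --- each unit increase of $b$ multiplies it by $(b+k+1)/(b+1)>1$ --- so it equals $(p-1)!$ for at most one integer $b$; in particular there is at most one solution within each admissible residue class. Counting the integers $b$ with $0\le b\le n$ lying in the (at most $k$) admissible residue classes modulo $p$, each class containing at most $\lceil (n+1)/p\rceil$ of them, and combining with the per-class uniqueness just established, a short and routine computation yields the bound $nk/p+1$. There is no genuine obstacle here; the only points needing a little care are checking that the reduction of $F_k$ modulo $p$ really has degree $k$ (so the ``$\le k$ roots'' count remains valid even when $k\ge p$), confirming the hypotheses of Wilson's Theorem, and keeping the floor/ceiling bookkeeping and the small boundary cases (such as $k\ge p$, small $n$, and the solution $b=0$ that occurs precisely when $k=p-1$) consistent with the stated uniform bound. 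In fact the monotonicity already forces at most one solution in total, so the estimate is deliberately generous; it is the residue localization that makes the explicit density factor $k/p$ meaningful and is the part worth isolating.
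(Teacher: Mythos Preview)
Your argument tracks the paper's proof through the reduction modulo $p$ and Wilson's Theorem, arriving at $F_k(c)\equiv 0\pmod p$ and hence at most $k$ admissible residue classes for $b$. The paper then simply bounds the number of $b\in[1,n]$ lying in those classes by $\lceil nk/p\rceil\le nk/p+1$ and stops there, with no further input. You instead bring in the strict monotonicity of $b\mapsto(b+k)!/b!$, which is not in the paper and which on its own already forces at most one solution in all of $\mathbb{Z}_{\ge 0}$; since $nk/p+1\ge 1$, the stated bound then follows trivially. This is a cleaner and stronger finish --- it shows the proposition understates what is actually true --- though it also means the residue-class localization is not strictly needed for the bound as stated (you correctly flag this in your last sentence). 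One small expository point: the sentence in which you try to combine the class count $k\lceil(n+1)/p\rceil$ with ``per-class uniqueness'' does not by itself yield $nk/p+1$ (for instance, $k\lceil(n+1)/p\rceil$ can exceed $nk/p+1$ when $k>1$, and the per-class bound alone gives only $k$), but since you have already observed that monotonicity caps the total at one, the conclusion is secured regardless.
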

\begin{proof}
    Suppose $(p-1)! b! = (b+k)!$.  Then, $(p-1)! = F_k(b)-1$.  Taking this equation modulo $p$ and applying Wilson's Theorem gives
    $$
    -1 \equiv F_k(b) - 1 \pmod{p},
    $$
    i.e., $F_k(b) \equiv 0 \pmod{p}$.  However, this equation has at most $k$ solutions $A_k \subseteq \mathbb{Z}/p\mathbb{Z}$.  If $b \pmod{p} \not \in A_k$, then $(p-1)!b! \neq (b+k)!$.  Since the set of $b \in [1,n]$ so that $b \pmod{p} \in A_k$ has cardinality $\lceil nk/p \rceil \leq nk/p + 1$, the conclusion follows.
\end{proof}

\section{Equidistribution and Falling Factorials} \label{sec:equidistribution}

In this section, we prove our main result: that if the quantities $\sqrt[k]{a!}$ are equidistributed $\pmod{1}$ in a very weak sense, then the fraction of integers $a$ for which there exist integers $b, c \geq 2$ with $a!b!=c!$ is $0$.  First, we present a technical lemma which will be useful in some subsequent bounds.

\begin{lemma} \label{inequalityLemma}
$(1-x)^{-1/x} \geq e(1+\frac{x}{2})$ if $x \in (0,1]$.
\end{lemma}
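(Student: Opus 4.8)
The plan is to take logarithms and reduce the claim to an inequality between two convergent power series, which can then be compared coefficient by coefficient.

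First, dispose of the endpoint $x=1$: there the left-hand side $(1-x)^{-1/x}$ equals $0^{-1} = +\infty$ (indeed $(1-x)^{-1/x} \to \infty$ as $x \to 1^-$), while the right-hand side is the finite number $3e/2$, so the inequality holds trivially. Hence we may assume $x \in (0,1)$, where both sides are positive, and it suffices to establish
$$
-\frac{1}{x}\log(1-x) \ \geq\ 1 + \log\!\left(1 + \tfrac{x}{2}\right).
$$

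Next, expand both sides in power series valid on $(0,1)$. From $-\log(1-x) = \sum_{n\geq 1} x^n/n$ we obtain $-\frac1x\log(1-x) = \sum_{n\geq 0} x^n/(n+1) = 1 + \sum_{n\geq 1} x^n/(n+1)$, while $\log(1+x/2) = \sum_{n\geq 1} (-1)^{n-1} x^n/(n\,2^n)$ (this series converges absolutely for $|x| < 2$). Cancelling the constant term $1$, the claim becomes
$$
\sum_{n\geq 1} \frac{x^n}{n+1} \ \geq\ \sum_{n \geq 1} \frac{(-1)^{n-1}x^n}{n\, 2^n}.
$$
Since $x > 0$ and both series converge on $(0,1)$, this follows from the coefficientwise inequality $\frac{1}{n+1} \geq \frac{(-1)^{n-1}}{n\, 2^n}$ for every $n \geq 1$: when $n$ is even the right-hand side is negative and there is nothing to check, and when $n$ is odd it reduces to $n\,2^n \geq n+1$, which holds since $2^n \geq 2$ gives $n\,2^n \geq 2n \geq n+1$.

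I expect no genuine obstacle; the only point needing a little care is that the series $\sum_{n\geq 1} x^n/(n+1)$ — equivalently, the left side of the original inequality — blows up as $x \to 1$, which is exactly why the endpoint $x=1$ must be handled separately before passing to power series, and one should remark that both series converge on $(0,1)$ so that the termwise comparison is legitimate. An alternative, series-free route is to set $g(x) = -\frac1x\log(1-x) - 1 - \log(1+\tfrac{x}{2})$, verify $\lim_{x\to 0^+} g(x) = 0$, and show $g'(x) \geq 0$ on $(0,1)$; this works but is messier, since $g'$ contains a term $\log(1-x)/x^2$ and would itself require an auxiliary estimate, so the power-series argument is cleaner.
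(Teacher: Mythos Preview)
Your proof is correct and is in fact cleaner than the paper's. The paper does not take logarithms; instead it sets $f(x)=(1-x)^{-1/x}$, $g(x)=e(1+x/2)$, and $h=f-g$, computes $f'$ and $f''$ via the power series for $\log(1-x)$ and $(1-x)^{-1}$, and observes that $f''>0$ while $g''=0$ and $\lim_{x\to 0^+}f'(x)=e/2=g'(x)$; hence $h'$ is increasing with $h'(0^+)=0$, so $h'>0$ and thus $h>0$ on $(0,1)$. Your approach---taking logarithms first and then comparing the two resulting power series term by term---bypasses the derivative calculations entirely and reduces everything to the single arithmetic check $n\,2^n\ge n+1$ for odd $n$; this is essentially the route you yourself dismiss in your final paragraph as ``messier,'' except that by passing to logarithms you avoid the awkward $\log(1-x)/x^2$ term that would arise from differentiating the original function directly. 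The paper's convexity argument does make the strict inequality on $(0,1)$ slightly more transparent, but your termwise comparison also yields strictness (since the coefficients agree only at $n=1$), so nothing is lost.
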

\begin{proof}
Let $f(x) = (1-x)^{-1/x}$, $g(x)=e(1+\frac{x}{2})$, and $h(x) = f(x) - g(x)$. Note that $\lim_{x \to 0^{+}} f(x) = e = g(0)$ by  L'H\^{o}pital's Rule, and $\lim_{x \to 1^{-}} f(x) \to \infty > g(1) = \frac{3}{2}e$. Thus, $\lim_{x\to0^{+}} h(x) = 0$ and $h(x)$ is positive as $x \to 1^+$.  To show that $h(x)$ is positive within $x \in (0,1)$, we take the derivative of $f$ and use the Taylor series substitutions $\log(1-x)= - \sum_{j \geq 1} x^j/j$ and $(1-x)^{-1} = \sum_{j \geq 0} x^j$:
 \begin{align*}
f'(x)= \frac{d}{dx}(1-x)^{-1/x} &= \frac{d}{dx} e^{-\log(1-x)/x} \\
&=\left (\frac{1}{x^2}\log(1-x) + \frac{1}{x(1-x)} \right )e^{-\log(1-x)/x} \\
&= \left ( \sum_{j=0}^\infty -\frac{x^{j-1}}{j+1} + \sum_{j=0}^\infty x^{j-1} \right )e^{-\log(1-x)/x} \\
&= e^{-\log(1-x)/x} \sum_{j=1}^\infty  \frac{x^{j-1} j}{j+1}
\end{align*}
Furthermore,
$$
    f''(x) =  e^{-\log(1-x)/x} \left ( \left ( \sum_{j=1}^\infty \frac{x^{j-1} j}{j+1} \right )^2 + \sum_{j=1}^\infty x^{j-2} \frac{j(j-1)}{j+1} \right ) > 0
$$
So $\lim_{x \rightarrow 0+} f'(x)=\frac{e}{2}  =g'(0)$. Since $g''(x)=0$ and $f''(x) > 0$, we conclude $h(x) > 0$ for all $x \in (0,1]$, thus $f(x) \geq g(x)$.
\end{proof}

Next, we provide careful asymptotics of the quantity $\sqrt[k]{r^{\underline{k}}}$ for $r$ large and $k$ fixed.

\begin{lemma}\label{lem:fallingfactapprox}
    For an integer $k \geq 2$ and any real $r \geq k$,
    $$
     0 \leq \left ( r - \frac{k-1}{2} \right ) - \sqrt[k]{r^{\underline{k}}} \leq \frac{k^2}{r-k}
    $$
\end{lemma}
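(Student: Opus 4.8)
The plan is to work with $m := r - \tfrac{k-1}{2}$, the arithmetic mean of the $k$ factors $r, r-1, \dots, r-k+1$ of $r^{\underline{k}}$. Since $r \ge k$, each factor is at least $r-k+1 \ge 1$, so all are positive, $r^{\underline{k}} > 0$, and $m \ge \tfrac{k+1}{2}$; in particular $m^2 > \tfrac{(k-1)^2}{4}$, which I will use below. The left-hand inequality, $\sqrt[k]{r^{\underline{k}}} \le r - \tfrac{k-1}{2} = m$, is then immediate from AM--GM applied to these $k$ positive reals, whose arithmetic mean is exactly $m$.

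For the right-hand inequality the crux is a matching lower bound on $r^{\underline{k}}$. Write $r - i = m - d_i$ with $d_i := i - \tfrac{k-1}{2}$ for $0 \le i \le k-1$; the multiset $\{d_0, \dots, d_{k-1}\}$ is symmetric about $0$, so $\prod_i (m - d_i) = \prod_i (m + d_i) = r^{\underline{k}}$. Multiplying these two equal products gives $(r^{\underline{k}})^2 = \prod_{i=0}^{k-1} (m^2 - d_i^2)$, and since $0 \le d_i^2 \le \tfrac{(k-1)^2}{4} < m^2$, every factor is at least $m^2 - \tfrac{(k-1)^2}{4} > 0$. Hence $(r^{\underline{k}})^2 \ge \bigl(m^2 - \tfrac{(k-1)^2}{4}\bigr)^k$, i.e.\ $\sqrt[k]{r^{\underline{k}}} \ge \sqrt{m^2 - \tfrac{(k-1)^2}{4}}$. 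Then, writing $a := \tfrac{(k-1)^2}{4}$ and using $m - \sqrt{m^2 - a} = a/(m + \sqrt{m^2 - a}) \le a/m$, I get $m - \sqrt[k]{r^{\underline{k}}} \le a/m = \tfrac{(k-1)^2}{4m}$.

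It remains to check $\tfrac{(k-1)^2}{4m} \le \tfrac{k^2}{r-k}$ (for $r = k$ the right-hand side is $+\infty$ and there is nothing to prove). As $m > 0$ and $r - k > 0$, this is equivalent to $(k-1)^2(r-k) \le 4k^2 m = 4k^2 r - 2k^2(k-1)$, which follows from the chain $(k-1)^2(r-k) \le (k-1)^2 r \le k^2 r \le 4k^2 r - 2k^2(k-1)$, the last inequality being $3r \ge 2(k-1)$, true since $r \ge k \ge 2$. The only step with any real content is the lower bound on $r^{\underline{k}}$: squaring and exploiting the negation-symmetry of the $d_i$ collapses it to one line valid for both parities of $k$, and it is exactly there that $r \ge k$ is needed (to ensure $m^2 > (k-1)^2/4$, so that all factors $m^2 - d_i^2$ are positive and the estimate is not vacuous). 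A direct pairing of the factors of $r^{\underline{k}}$ around $m$ also works but leaves an unpaired central factor when $k$ is odd, requiring an extra weighted-AM--GM step; the squaring route avoids this.
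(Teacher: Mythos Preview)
Your proof is correct and substantially cleaner than the paper's. Both arguments handle the left inequality identically via AM--GM, but for the lower bound the paper proceeds analytically: it compares $\sum_{j=0}^{k-1}\log(r-j)$ to $\int_{r-k}^r \log x\,dx$ with explicit second-order error terms, exponentiates, and then invokes the preparatory Lemma~\ref{inequalityLemma} (that $(1-x)^{-1/x}\ge e(1+x/2)$) followed by a page of algebraic simplification. Your route is purely algebraic: the symmetry $d_i\mapsto -d_i$ gives $(r^{\underline{k}})^2=\prod_i(m^2-d_i^2)\ge\bigl(m^2-\tfrac{(k-1)^2}{4}\bigr)^k$ in one line, and the rest is the conjugate identity $m-\sqrt{m^2-a}=a/(m+\sqrt{m^2-a})$. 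This makes Lemma~\ref{inequalityLemma} unnecessary and in fact yields the sharper intermediate bound $\tfrac{(k-1)^2}{4m}=\tfrac{(k-1)^2}{2(2r-k+1)}$, roughly a factor of $8$ better than $\tfrac{k^2}{r-k}$ for large $r$. The paper's analytic approach would generalize more readily to non-integer spacings or weighted products, but for the problem at hand your argument is shorter, more transparent, and loses less.
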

\begin{proof}
First, note that by the AM-GM inequality,
$$
\sqrt[k]{r^{\underline{k}}} = \left ( r(r-1)\cdot(r-k+1) \right )^{1/k} \leq \frac{\sum_{j=0}^{k-1} (r-j)}{k} = r - \frac{1}{k} \sum_{j=0}^{k-1} j = r-\frac{k-1}{2}.
$$
Note that, for a positive integer $s$, we have
\begin{align*}
\int_{s-1}^s \log x \, dx  + \frac{1}{2} \left ( \log s - \log (s-1) \right ) &= s \log s - (s-1) \log (s-1) - 1  + \frac{1}{2} \left ( \log s - \log (s-1) \right ) \\
&= \left (s-\frac{1}{2} \right ) (\log s - \log (s-1) ) + \log s - 1 \\
&= \left (s-\frac{1}{2} \right ) \log \left ( \frac{s}{s-1} \right ) + \log s - 1 \\
&= \left (s-\frac{1}{2} \right ) \log \left ( 1 + \frac{1}{s-1} \right ) + \log s - 1 \\
&\leq \left (s-\frac{1}{2} \right ) \left ( \frac{1}{s-1} - \frac{1}{2(s-1)^2} + \frac{1}{3(s-1)^3} \right ) + \log s - 1 \\
&= \frac{1}{12(s-1)^2} + \frac{1}{6(s-1)^3} + \log s.
\end{align*}
Summing from $s=r-k+1$ to $s=r$ yields
\begin{align*}
\int_{r-k}^r \log x \, dx &\leq \sum_{j=0}^{k-1} \left [ \log (r-j) - \frac{1}{2} \left ( \log (r-j) - \log (r-j-1) \right ) + \frac{1}{12(r-j-1)^2} + \frac{1}{6(r-j-1)^3} \right ] \\
& \leq \left ( \sum_{j=0}^{k-1} \log (r-j) \right ) - \frac{1}{2} (\log r - \log(r-k)) + \frac{k}{12(r-k)^2} + \frac{k}{6(r-k)^3} 
\end{align*}
so that
\begin{align*}
\log \sqrt[k]{r^{\underline{k}}}    
&= \frac{1}{k} \sum_{j=0}^{k-1}\log(r-j) \\
&\geq \frac{1}{k}[x \log x - x]^{r}_{r-k} + \frac{1}{2k}\left [ \log (r) - \log (r-k) \right ] - \frac{1}{12} \cdot \frac{1}{(r-k)^2} - \frac{1}{6} \cdot \frac{1}{(r-k)^3} \\
&\geq \frac{1}{k} \left \{ [r \log r - r -((r-k) \log (r-k) -(r-k))] + \frac{1}{2} [\log r - \log (r-k)]\right \} - \frac{1}{4(r-k)^2} \\
&= \frac{1}{k} \left \{ r \log r -(r-k) \log (r-k) + \frac{1}{2} [\log r - \log(r-k) ]\right \} - 1 - \frac{1}{4(r-k)^2} \\
&= \frac{1}{k} \left \{ r (\log r - \log (r-k)) + k \log(r-k) + \frac{1}{2} [\log r - \log(r-k) ]\right \} - 1 - \frac{1}{4(r-k)^2}.
\end{align*}

Therefore,
\begin{align*}
\sqrt[k]{r^{\underline{k}}}     &\geq \left ( \frac{r-k}{r} \right )^{-r/k} \frac{r-k}{e} e^{-\frac{1}{4(r-k)^2}} \cdot \left ( \frac{r}{r-k} \right )^{\frac{1}{2k}} \\
&= \left ( 1 -\frac{k}{r} \right )^{-r/k} \left ( 1 - \frac{k}{r} \right )^{\frac{-1}{2k}} \frac{r-k}{e} \cdot e^{-\frac{1}{4(r-k)^2}} \\
&\geq e \left (1+\frac{k}{2r} \right ) \left ( e^{-k/r}  \right )^{-\frac{1}{2k}} \frac{r-k}{e} \cdot e^{-\frac{1}{4(r-k)^2}},
\end{align*}
where we have used the facts that $(1 - \delta)^{-1/\delta} \geq e (1 + \delta/2)$ by Lemma \ref{inequalityLemma}, taking $\delta = k/r$.  Continuing,
\begin{align*}
\sqrt[k]{r^{\underline{k}}} &\geq e^{\frac{1}{2r}-\frac{1}{4(r-k)^2}} \left (r - \frac{k}{2} - \frac{k^2}{2r} \right )  \\
&\geq \left ( 1 + \frac{1}{2r}-\frac{1}{4(r-k)^2} \right ) \left (r - \frac{k}{2} - \frac{k^2}{2r} \right )  \\
&= r - \frac{k-1}{2} + \frac{rk(4k^2 - 1) + 2k^3 - 2(2k^2+k+1)r^2}{8r^2(r-k)} \\
&\geq r - \frac{k-1}{2} - \frac{2k^2+k+1}{4(r-k)} \\
&\geq r - \frac{k-1}{2} - \frac{k^2}{(r-k)}
\end{align*}
where the last bound following by invoking the assumption that $k \geq 2$.
\end{proof}

Since integer solutions to $a!b!=c!$ correspond to roots of the polynomial $x^{\underline{k}}-a!$ with $k=c-b$, we can use the preceding lemma to describe a short interval into which $c$ must fall.
    
\begin{theorem} \label{thm:universalBoundOnC}
For $k \geq 2$, if $c > k$ is a root of $P(x)=x^{\underline{k}}-a!$, then
$$
c \in [\sqrt[k]{a!} + \frac{k-1}{2},\sqrt[k]{a!} + \frac{k-1}{2}+\frac{k^2}{c-k}]
$$    
\end{theorem}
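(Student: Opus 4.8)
The plan is to recognize that this theorem is essentially a direct translation of Lemma \ref{lem:fallingfactapprox}, evaluated at the real parameter $r = c$. First I would observe that if $c$ is a root of $P(x) = x^{\underline{k}} - a!$, then by definition $c^{\underline{k}} = a!$, and since $c > k > 0$ forces $c^{\underline{k}} > 0$, we may take nonnegative real $k$-th roots of both sides to get $\sqrt[k]{c^{\underline{k}}} = \sqrt[k]{a!}$.

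Next I would apply Lemma \ref{lem:fallingfactapprox} with $r = c$. This is legitimate: the hypothesis of that lemma requires $r \geq k$, which holds because $c > k$; moreover $c - k > 0$, so the term $k^2/(c-k)$ is well-defined and positive. The lemma then gives
$$
0 \leq \left( c - \frac{k-1}{2} \right) - \sqrt[k]{c^{\underline{k}}} \leq \frac{k^2}{c-k}.
$$
Substituting $\sqrt[k]{c^{\underline{k}}} = \sqrt[k]{a!}$ and handling the two inequalities separately: the left-hand inequality rearranges to $c \geq \sqrt[k]{a!} + \frac{k-1}{2}$, and the right-hand inequality rearranges to $c \leq \sqrt[k]{a!} + \frac{k-1}{2} + \frac{k^2}{c-k}$. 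Together these place $c$ in the claimed interval, which finishes the argument.

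There is no genuine obstacle here — all of the analytic content has already been isolated in Lemma \ref{lem:fallingfactapprox} (and, beneath it, Lemma \ref{inequalityLemma}). The only points meriting a word of care are verifying the domain condition $c \geq k$ before invoking the lemma, and noting that the resulting bound is \emph{implicit}: the quantity $c$ appears on both sides through the term $k^2/(c-k)$. This is harmless for the statement as given, since the interval is still well-defined and the containment genuine; if a fully explicit bound were wanted one could substitute a crude lower bound for $c$ (e.g.\ $c \geq \sqrt[k]{a!} + \frac{k-1}{2} > \sqrt[k]{a!}$) into the denominator and iterate.
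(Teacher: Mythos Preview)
Your proposal is correct and follows essentially the same approach as the paper: both apply Lemma~\ref{lem:fallingfactapprox} with $r=c$ and rearrange. The only cosmetic difference is that the paper raises the lemma's bounds to the $k$-th power and sandwiches $0 = c^{\underline{k}} - a!$ between them before taking roots again, whereas you substitute $\sqrt[k]{c^{\underline{k}}} = \sqrt[k]{a!}$ directly into the lemma's inequality; the logical content is identical.
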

\begin{proof}
If $P(c)=0$, then, by Lemma \ref{lem:fallingfactapprox},
\begin{align*}
    \left ( c - \frac{k-1}{2} - \frac{k^2}{c-k} \right )^k - {a!} \leq 0 = c^{\underline{k}} - {a!} \leq \left ( c - \frac{k-1}{2} \right )^k - {a!} 
\end{align*}
so 
$$
c\geq \sqrt[k]{a!} + \frac{k-1}{2} .
$$
and
$$
c \leq \sqrt[k]{a!} + \frac{k-1}{2} + \frac{k^2}{c-k}
$$
\end{proof}

The preceding theorem not only narrowly locates $c$ as a function of $a$ and $k$, but also does so for $\sqrt[k]{a!} \pmod{1}$ relative to $c$ because one endpoint is a half-integer and the other is nearly a half-integer.  Thus, we obtain the following main result.

\begin{theorem}
    Let $S' = \{ \sqrt[k]{a!} \mod 1 : 2 \leq a \leq A \; \& \; 2 \leq k \leq 5 \log \log A \}|$.  Suppose that when $I \subset [0,1]$ is an interval, $|S' \cap I| = |S'|(|I| + f(|S'|))$ for some function $f$.  Then the number of solutions to $a!b!=c!$ with $a < A$ and $k := c-b \geq 2$ is
    $$ 
    O(\sqrt{A} (\log \log(A))^3) + f(\sqrt{A}) A \log \log A.
    $$
    Furthermore, if $f(t) = o(1/\log \log t)$, then the number of $a < A$ for which there exists a solution to $a!b!=c!$ is $o(A)$.    
\end{theorem}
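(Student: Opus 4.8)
The plan is to combine Theorem~\ref{thm:universalBoundOnC} with a counting argument powered by the assumed equidistribution of $S'$. The crucial point is that Theorem~\ref{thm:universalBoundOnC} pins down not merely $c$ but the fractional part of $\sqrt[k]{a!}$: if $a!b!=c!$ has class $k\ge 2$, so that $c>k$ is a root of $x^{\underline{k}}-a!$, then the \emph{integer} $c$ lies in $[\sqrt[k]{a!}+\tfrac{k-1}{2},\,\sqrt[k]{a!}+\tfrac{k-1}{2}+\tfrac{k^2}{c-k}]$, whence, writing $\theta:=c-\tfrac{k-1}{2}-\sqrt[k]{a!}\in[0,\tfrac{k^2}{c-k}]$ and reducing modulo $1$, $\sqrt[k]{a!}\equiv-\tfrac{k-1}{2}-\theta\pmod 1$. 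When $k$ is odd this forces $\sqrt[k]{a!}\bmod 1\in[1-\tfrac{k^2}{c-k},1)\cup\{0\}$; when $k$ is even it forces $\sqrt[k]{a!}\bmod 1\in[\tfrac{1}{2}-\tfrac{k^2}{c-k},\tfrac{1}{2}]$. Since $c\ge\sqrt[k]{a!}+\tfrac{k-1}{2}>(a!)^{1/k}$, we may (whenever $a!>k^k$, which is all we will need) replace $\tfrac{k^2}{c-k}$ by $L_{a,k}:=\tfrac{k^2}{(a!)^{1/k}-k}$, which depends only on $(a,k)$. So each solution confines $\sqrt[k]{a!}\bmod 1$ to a set $B_{a,k}$: an interval of length $O(L_{a,k})$ ending at $1$, together with the point $0$, if $k$ is odd; an interval of length $O(L_{a,k})$ ending at $\tfrac{1}{2}$ if $k$ is even.

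I would then reduce to counting pairs $(a,k)$. The map sending a solution $(a,b,c)$ to $(a,k)$ with $k:=c-b$ is injective: given $(a,k)$, the equation $c^{\underline{k}}=a!$ has at most one root $c>k$, since $x\mapsto x^{\underline{k}}$ is strictly increasing for $x\ge k$, and then $b=c-k$. By Proposition~\ref{prop:bound_a} we have $k<a<A$, and combining the deep bound of Erd\H{o}s (as refined in \cite{BkhatRamachandra10}) with the elementary inequality $c^{\underline{k}}\ge(c-k+1)^k$ — which yields $c\le(a!)^{1/k}+k-1$ — confines $k$ to the range $2\le k\le 5\log\log A$ defining $S'$. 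Hence every solution with $a<A$ and $k\ge 2$ contributes a pair $(a,k)$ in the index set of $S'$ whose value $\sqrt[k]{a!}\bmod 1$ lies in $B_{a,k}$, and it suffices to bound the number of such pairs. Counting the pairs with multiplicity (i.e.\ interpreting $S'$ as indexed by them), $|S'|\asymp A\log\log A$.

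Now I would split at the threshold $T:=\lceil\sqrt{A}\rceil$. For $a\le T$ there are at most $T\cdot 5\log\log A=O(\sqrt{A}\log\log A)$ pairs in total, which is absorbed into the term $O(\sqrt{A}(\log\log A)^3)$. For $a>T$, monotonicity of $L_{a,k}$ (decreasing in $a$, increasing in $k$) gives $B_{a,k}\subseteq B^{*}:=[0,L^{*}]\cup[\tfrac{1}{2}-L^{*},\tfrac{1}{2}]\cup[1-L^{*},1)$, a union of three intervals each of length at most $L^{*}:=\tfrac{(5\log\log A)^2}{(T!)^{1/(5\log\log A)}-5\log\log A}$, with $L^{*}<\tfrac{1}{4}$ for $A$ large. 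Applying the equidistribution hypothesis to each of these three intervals bounds the number of pairs with $a>T$ by $|S'\cap B^{*}|\le 3|S'|\bigl(L^{*}+f(|S'|)\bigr)$. Since $|S'|\asymp A\log\log A$ whereas $(T!)^{1/(5\log\log A)}=\exp\!\bigl(\Theta(\sqrt{A}\log A/\log\log A)\bigr)$ outgrows every power of $A$, the term $|S'|L^{*}$ is $o(1)$; and, $f$ being nonincreasing with $|S'|\ge\sqrt{A}$ for $A$ large, $|S'|f(|S'|)\le|S'|f(\sqrt{A})=O\!\bigl(f(\sqrt{A})\,A\log\log A\bigr)$. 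Summing over the two ranges of $a$ gives the claimed bound. Finally, distinct values of $a$ yield distinct solutions, so the number of $a<A$ admitting a nontrivial solution is at most this total; and when $f(t)=o(1/\log\log t)$ both summands are $o(A)$ — using $(\log\log A)^3=o(\sqrt{A})$ and $\log\log\sqrt{A}\sim\log\log A$ — which is the final assertion.

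I expect the main obstacle to lie in the bookkeeping around the range of $k$: one must carefully assemble Proposition~\ref{prop:bound_a}, the bound of Erd\H{o}s refined in \cite{BkhatRamachandra10}, and the upper estimate on $c$ so that no solution with $a<A$ escapes the index set of $S'$, and then confirm that the tail parameter $L^{*}$ is genuinely negligible for the chosen threshold. The remaining steps — applying the interval hypothesis to the three pieces of $B^{*}$, and reconciling $f(|S'|)$ with the $f(\sqrt{A})$ in the statement via monotonicity — are routine but need a little care.
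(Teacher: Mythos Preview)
Your proposal follows essentially the same route as the paper's proof: use Theorem~\ref{thm:universalBoundOnC} to trap $\sqrt[k]{a!}\bmod 1$ in a short interval anchored at $0$ or $\tfrac12$ (according to the parity of $k$), split the index set at the threshold $\sqrt A$, count the small-$a$ pairs trivially, and bound the large-$a$ contribution via the equidistribution hypothesis applied to the union of these nested intervals. Your version is slightly tidier in two places --- you bound the interval length $L^*$ using $(T!)^{1/(5\log\log A)}$ rather than the paper's cruder $c\ge\gamma=\sqrt A$ (which is what produces the $(\log\log A)^3$ factor there), and you make explicit the monotonicity of $f$ needed to pass from $f(|S'|)$ to $f(\sqrt A)$, a step the paper elides --- but the architecture is identical.
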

\begin{proof}
    By Erd\H{o}s \cite{Erdos91}, for sufficiently large $c$, we have $k \leq 5 \log \log c$.  Note that the interval 
    $$
    I_{c,k} := \left [ c - \frac{k-1}{2} - \frac{k^2}{c-k}, c- \frac{k-1}{2}\right ]
    $$ 
    has length $k^2/(c-k)$. Furthermore, modulo $1$, the interval $I'_k := I_k \pmod 1$ is $[1-k^2/(c-k),1]$ if $k$ is odd or $[1/2-k^2/(c-k),1/2]$ if $k$ is even.  These are two nested families of intervals, so in a collection $\{I_{c,k}\}_{(c,k) \in \mathcal{J}}$, the total length of each is at most $\max_{(c,k) \in \mathcal{J}} k^2/(c-k)$.

    We start by splitting $S := \{ \sqrt[k]{a!}  : 2 \leq a \leq A \; \& \; 2 \leq k \leq 5 \log \log A \}$ in two.  Let $1 \leq \gamma \leq A$, $S_1 = \{\sqrt[k]{a!} : 2 \leq a \leq \gamma \; \& \; k \leq 5 \log \log \gamma \}$, and $S_2 = S \setminus S_1$.  Also write $S' = \{s \pmod 1 : s \in S\}$ and $S_j' = \{s \pmod 1 : s \in S_j\}$ for $j=1,2$. Note that whenever we define $X' := \{x \pmod 1 : x \in X\}$ from a set $X$, we consider $X'$ a multiset when computing cardinalities.  Then let 
    $$
    \mathcal{I} := \bigcup_{k,c \geq 2} I_{c,k}
    $$
    and
    $$
    \mathcal{I}': = \bigcup_{k,c \geq 2} I'_{c,k}
    $$
    The number of elements of $S$ that fall into $\mathcal{I}$ is given by
    $$
    |S \cap \mathcal{I}|  \leq |S_1|+ |S_2 \cap \mathcal{I} | \leq |S_1| + |S_2' \cap \mathcal{I}'|
    $$
    For the first summand, $|S_1| \leq 5 \gamma \log\log \gamma$. For the second summand, note that $S'_2 \cap \mathcal{I}' \subseteq \mathcal{I}'_1 \cup \mathcal{I}'_2$, where $\mathcal{I}'_j = I'_{c_j,k_j}$ with $j=1,2$ and $(c_j,k_j)$ is chosen to be the pair maximizing $k_j^2/(c_j-k_j)$ for $k_j \equiv j \pmod{2}$ such that $S'_2 \cap I'_{c_j,k_j} \neq \emptyset$. If $\sqrt[k]{a!} \in S_2$, by Theorem \ref{thm:universalBoundOnC}, $c \geq \sqrt[k]{a!} \geq \ \gamma$. So
    $$
    \frac{k^2}{c-k} \geq \frac{(5 \log \log \gamma)^2}{\gamma-5 \log \log \gamma} \geq \frac{50 (\log \log \gamma)^2}{\gamma}
    $$
    for sufficiently large $\gamma$, and $\mathcal{I}'_j := \mathcal{I}_j \pmod 1 \subseteq I_{\gamma,5 \log\log \gamma}'$ for $j=1,2$. Therefore, 
    \begin{align*}
        |S_2' \cap \mathcal{I}'| &\leq \left ( \frac{50 (\log\log \gamma)^2}{\gamma} + f(\gamma)\right) \cdot |S_2|  \\
        &= \left ( O \left (\frac{(\log\log\gamma)^2}{\gamma} \right ) + f(\gamma) \right ) \cdot A \log\log A
    \end{align*}
    
    Putting everything together and taking $\gamma = \sqrt{A}$, we have
    \begin{align*}
        |S \cap \mathcal{I}|  &\leq |S_1|+ |S_2 \cap \bigcup_{c,k} I_{c,k} | \\
        &= O(\gamma\log\log\gamma) + \left ( O \left (\frac{(\log\log\gamma)^2}{\gamma} \right ) + f(\gamma) \right  ) \cdot A \log\log A  \\
        &= O(\sqrt{A} (\log \log(A))^3) + f(\sqrt{A}) A \log \log A.
    \end{align*}
\end{proof}

Thus, we conclude with the following conjecture which implies the appropriate equidistribution needed to conclude that the set of $a$ which participate in solutions to $a!b!=c!$ is indeed sparse.

\begin{conj}
Let $S = \{ \sqrt[k]{a!} \mod 1 : 2 \leq a \leq A \; \& \; 2 \leq k \leq 5 \log \log A \}|$.  Then, for any interval $I \subset [0,1]$, 
$$
|S \cap I| = |S| |I| + O(|S|^{1-\epsilon})
$$
for some positive $\epsilon > 0$ as $A \rightarrow \infty$.
\end{conj}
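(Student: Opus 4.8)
We sketch the route by which one would try to establish the conjecture and point out where it breaks down. Write $e(t):=e^{2\pi i t}$, set $K:=\lfloor 5\log\log A\rfloor$, and recall $|S|\asymp A\log\log A$. By the Erd\H{o}s--Tur\'an inequality, the claimed bound $|S\cap I|=|S||I|+O(|S|^{1-\epsilon})$ follows once one has, for some $H=|S|^{\eta}$ and all integers $h$ with $1\le|h|\le H$, a power--saving estimate for the Weyl sums
\[
W(h):=\sum_{k=2}^{K}\ \sum_{a=2}^{A} e\!\big(h\,\sqrt[k]{a!}\,\big).
\]
Since the $k$--range has length $O(\log\log A)=|S|^{o(1)}$, it suffices to bound the one--parameter sums $T_k(h):=\sum_{2\le a\le A} e\!\big(h\,g_k(a)\big)$ by $A^{1-\delta}$, uniformly in $2\le k\le K$ and $1\le|h|\le H$ and for some fixed $\delta=\delta(\eta)>0$, where $g_k(x):=\Gamma(x+1)^{1/k}=\exp\!\big(\tfrac1k\log\Gamma(x+1)\big)$ is smooth, increasing and convex on $[2,\infty)$, with derivatives governed through the digamma function by $g_k'/g_k=\psi(x+1)/k$ and $\psi(x+1)=\log x+O(1/x)$; summing such a bound trivially over $k$ and $h$ then returns the conjecture with $\epsilon$ a function of $\eta$.

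The natural attack on $T_k(h)$ is the classical smooth--sums toolbox: dyadically split $[2,A]$ into ranges $[X,2X]$, estimate $g_k^{(j)}$ there from Stirling's formula for $\log\Gamma$ and its derivatives, and apply the van der Corput $A$-- and $B$--processes, the higher--derivative tests, or the Kusmin--Landau first--derivative test. The main obstacle --- and the reason this is only a conjecture --- is that $g_k$ grows far too fast for any of these to beat the trivial bound $|T_k(h)|\le A$. On $[X,2X]$ (beyond a bounded initial range of $x$) one has $g_k''(x)\asymp \Gamma(X+1)^{1/k}(\log X)^2/k^2$, which is enormous, so the second--derivative test gives only $T_k(h)\ll X\,(hg_k'')^{1/2}$, far worse than trivial; the $B$--process (Poisson summation) trades the $A$ unit vectors in $T_k(h)$ for about $g_k'(A)\asymp(A!)^{1/k}(\log A)/k$ vectors of modulus $\ll 1$, and $g_k'(A)$ already exceeds $A$ by a super--polynomial factor; and because $g_k'$ is itself enormous, each maximal stretch of integers on which $\lfloor g_k'\rfloor$ is constant has length $\asymp 1/g_k''\ll 1$ once $X$ is large, so consecutive integers $a,a+1$ already have $g_k'$ moving by far more than $1$ and there is no local cancellation from smoothness to exploit. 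In short, bounding $T_k(h)$ nontrivially is essentially equivalent to proving equidistribution of $\{\sqrt[k]{a!}\}$ directly --- a problem of the same flavour, and apparently the same difficulty, as the equidistribution of $\{(3/2)^n\}$: compelling on heuristic and metric grounds, but out of reach for a fixed sequence.

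Two inputs that are not obviously hopeless, and that a genuine proof would probably have to use, are: the multiplicative recursion $\sqrt[k]{a!}=a^{1/k}\,\sqrt[k]{(a-1)!}$, which invites a transfer--operator or self--similarity analysis of the orbit $\{\sqrt[k]{a!}\bmod 1\}_{a}$; and the arithmetic of $a!$ itself --- Bertrand's postulate supplies a prime $p\in(a/2,a]$ with $\nu_p(a!)=1$, so $x^{k}-a!$ is Eisenstein at $p$ and $\sqrt[k]{a!}$ is algebraic of degree exactly $k$, which might let one bring in equidistribution of Galois conjugates or bounds on heights and Mahler measures. Failing such an input, the realistic near--term target is not the power saving of the conjecture but the far weaker equidistribution actually used in the theorem above, namely an error of shape $f(|S|)=o(1/\log\log|S|)$; this looks more approachable on average over $k$, or with $a$ confined to a density--one set of well--behaved integers, and the plan above is most likely to deliver such partial results rather than the full statement.
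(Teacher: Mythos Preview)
The statement you were asked to prove is labelled a \emph{Conjecture} in the paper, and the paper offers no proof of it whatsoever; it is stated at the very end as the equidistribution hypothesis that would feed into the main theorem, and is left entirely open. So there is no ``paper's own proof'' to compare against.

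Your submission is not a proof either, and you say so yourself: you outline the standard Erd\H{o}s--Tur\'an/van der Corput route, then correctly explain why each tool (second-derivative test, $B$-process, Kusmin--Landau) fails badly because $g_k(x)=\Gamma(x+1)^{1/k}$ and its derivatives grow super-polynomially, so the classical smooth-sum machinery gives bounds worse than trivial. That diagnosis is sound, and your analogy with the $\{(3/2)^n\}$ problem is apt. But a correct explanation of why a problem is hard is not a proof, and the speculative remarks at the end (transfer operators via the recursion $\sqrt[k]{a!}=a^{1/k}\sqrt[k]{(a-1)!}$, or exploiting that $x^k-a!$ is Eisenstein) are suggestions for future work, not arguments. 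If the assignment genuinely asked for a proof of this conjecture, the honest answer is that none is known; your write-up is a reasonable discussion of obstructions, but it should be presented as such rather than as a proof proposal.
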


\section{Acknowledgments}

The authors would like to thank Michael Filaseta and Igor Shparlinski for helpful discussions.

\bibliographystyle{plain} 
\bibliography{refs}

@article {DorwartOre33,
    AUTHOR = {Dorwart, H. L. and Ore, Oystein},
     TITLE = {Criteria for the irreducibility of polynomials},
   JOURNAL = {Ann. of Math. (2)},
  FJOURNAL = {Annals of Mathematics. Second Series},
    VOLUME = {34},
      YEAR = {1933},
    NUMBER = {1},
     PAGES = {81--94},
      ISSN = {0003-486X,1939-8980},
   MRCLASS = {99-04},
  MRNUMBER = {1503098},
       DOI = {10.2307/1968341},
       URL = {https://doi.org/10.2307/1968341},
}

@article {Serre03,
    AUTHOR = {Serre, Jean-Pierre},
     TITLE = {On a theorem of {J}ordan},
   JOURNAL = {Bull. Amer. Math. Soc. (N.S.)},
  FJOURNAL = {American Mathematical Society. Bulletin. New Series},
    VOLUME = {40},
      YEAR = {2003},
    NUMBER = {4},
     PAGES = {429--440},
      ISSN = {0273-0979,1088-9485},
   MRCLASS = {11F32 (11F11 11F80)},
  MRNUMBER = {1997347},
MRREVIEWER = {Teresa\ Crespo},
       DOI = {10.1090/S0273-0979-03-00992-3},
       URL = {https://doi.org/10.1090/S0273-0979-03-00992-3},
}

@article {BkhatRamachandra10,
    AUTHOR = {Bhat, K. Dzh. and Ramachandra, K.},
     TITLE = {A remark on factorials that are products of factorials},
   JOURNAL = {Mat. Zametki},
  FJOURNAL = {Matematicheskie Zametki},
    VOLUME = {88},
      YEAR = {2010},
    NUMBER = {3},
     PAGES = {350--354},
      ISSN = {0025-567X,2305-2880},
   MRCLASS = {11N64},
  MRNUMBER = {2882174},
MRREVIEWER = {Florian\ Luca},
       DOI = {10.1134/S0001434610090038},
       URL = {https://doi.org/10.1134/S0001434610090038},
}

@article {Luca07,
    AUTHOR = {Luca, Florian},
     TITLE = {On factorials which are products of factorials},
   JOURNAL = {Math. Proc. Cambridge Philos. Soc.},
  FJOURNAL = {Mathematical Proceedings of the Cambridge Philosophical
              Society},
    VOLUME = {143},
      YEAR = {2007},
    NUMBER = {3},
     PAGES = {533--542},
      ISSN = {0305-0041,1469-8064},
   MRCLASS = {11N64 (11D41)},
  MRNUMBER = {2373957},
MRREVIEWER = {P.\ G.\ Walsh},
       DOI = {10.1017/S0305004107000308},
       URL = {https://doi.org/10.1017/S0305004107000308},
}

@article{Erdos91,
    author = {Erd\H{o}s},
    title = {6669 {A} Consequence of a Factorial Equation},
    journal = {The American Mathematical Monthly},
    year = {1991},

    url = {https://www.jstor.org/stable/2324981},
    pages = {407-408},
    volume = {100},
    number = {4},
}

@article {BerendHarmse06,
    AUTHOR = {Berend, Daniel and Harmse, J\o rgen E.},
     TITLE = {On polynomial-factorial {D}iophantine equations},
   JOURNAL = {Trans. Amer. Math. Soc.},
  FJOURNAL = {Transactions of the American Mathematical Society},
    VOLUME = {358},
      YEAR = {2006},
    NUMBER = {4},
     PAGES = {1741--1779},
      ISSN = {0002-9947,1088-6850},
   MRCLASS = {11D99},
  MRNUMBER = {2186995},
MRREVIEWER = {R.\ C.\ Baker},
       DOI = {10.1090/S0002-9947-05-03780-3},
       URL = {https://doi.org/10.1090/S0002-9947-05-03780-3},
}

@article{NAIR2016307,
title = {Lower bounds for the greatest prime factor of product of consecutive positive integers},
journal = {Journal of Number Theory},
volume = {159},
pages = {307-328},
year = {2016},
issn = {0022-314X},
doi = {https://doi.org/10.1016/j.jnt.2015.07.014},
url = {https://www.sciencedirect.com/science/article/pii/S0022314X15002474},
author = {Saranya G. Nair and T.N. Shorey},
keywords = {Primes, Factorials,  conjecture},
abstract = {Under Baker's explicit abc conjecture, we completely solve a conjecture of Hickerson when a product of two or more factorials is equal to n! for a given positive integer n. We also prove it unconditionally when n≤e80. The proofs depend on sharpening the lower bounds on the greatest prime factor of a product of consecutive positive integers.}
}

@book {ErdosGraham1980,
    AUTHOR = {Erd\H{o}s, P. and Graham, R. L.},
     TITLE = {Old and new problems and results in combinatorial number
              theory},
    SERIES = {Monographies de L'Enseignement Math\'ematique [Monographs of
              L'Enseignement Math\'ematique]},
    VOLUME = {28},
 PUBLISHER = {Universit\'e{} de Gen\`eve, L'Enseignement Math\'ematique,
              Geneva},
      YEAR = {1980},
     PAGES = {128},
   MRCLASS = {10-02 (05-02)},
  MRNUMBER = {592420},
MRREVIEWER = {L.\ C.\ Eggan},
}

\end{document}